\def\ds{\displaystyle}
\def\eps{{\varepsilon}}
\def\N{\mathbb{N}}
\def\R{\mathbb{R}}
\def\HH{\mathcal{H}}
\newcommand{\be}{\begin{equation}}
\newcommand{\ee}{\end{equation}}
\newcommand{\de}{\partial}
\newcommand{\bC}{{\bf C}}
\newcommand{\sing}{{\rm Sing}}
\newcommand{\freq}{\mathcal{S}}
\theoremstyle{plain}
\newtheorem{teo}{Theorem}
\newtheorem{lemma}[teo]{Lemma}
\theoremstyle{remark}
\newtheorem{oss}[teo]{Remark}
\title[Almost everywhere  uniqueness of blow-up limits]{Almost everywhere uniqueness of blow-up limits for the lower dimensional obstacle problem}
\author{Maria Colombo, Luca Spolaor, Bozhidar Velichkov}
\address {Maria Colombo: \newline \indent
Institute for Theoretical Studies, ETH Z\"urich,
	\newline \indent
 Clausiusstrasse 47, CH-8092 Z\"urich, Switzerland
 	}
\email{maria.colombo@epfl.ch}
\address {Luca Spolaor: \newline \indent
UC San Diego, 
	\newline \indent
	9500 Gilman Drive,
	La Jolla, CA 92093-0112, USA}
\email{lspolaor@ucsd.edu}
\address {Bozhidar Velichkov: \newline \indent
	Dipartimento di Matematica e Applicazioni
	"Renato Caccioppoli" \newline \indent
	Universit\`a degli Studi di Napoli Federico II\newline \indent
	Via Cintia, Monte S. Angelo
	I-80126 Napoli, Italy}
\email{bozhidar.velichkov@unina.it}
\begin{document}



\begin{abstract}
We answer a question left open in \cite{fosp2} and \cite{fosp2-cor}, by proving that the blow-up of minimizers $u$ of the lower dimensional obstacle problem is unique at generic point of the free-boundary. 
\end{abstract}

\maketitle

\textbf{Keywords:} monotonicity formula, thin obstacle problem, free boundary, singular points, frequency function

\section{Introduction}

Let $s\in (0,1)$, let $B_1$ be the unit ball in $\R^d$, where $d\geq 2$, and let $B_1' := B_1 \cap \{ x_{d}=0 \}$.
For any point $x=(x_1,\dots,x_d)\in\R^d$ we denote by $x'$ the vector of the first $(d-1)$ coordinates, $x'=(x_1,\dots,x_{d-1})$.  We consider the  class of admissible functions
$$\mathcal A:=\big\{ u\in H^1(B_1, x_d^{1-2s} \mathcal L^d)\,:\,u\geq 0 \mbox{ on }B_1'   \,,\,\,u(x',x_d)=u(x',-x_d) \mbox{ for every }(x',x_d)\in B_1 \big\}\,,$$
We say that $u\in\mathcal A$ is a \emph{solution of the lower dimensional obstacle problem} if 
\begin{equation}
\label{pbm:thin}
\int_{B_1}x_d^{1-2s} |\nabla u|^2\,dx\le \int_{B_1}x_d^{1-2s}|\nabla v|^2\,dx\quad\text{for every}\quad v\in\mathcal A\quad\text{such that}\quad u-v\in H^1_0(B_1).
\end{equation}

\noindent  For a solution $u\in \mathcal A$ of the lower dimensional obstacle problem, we define the \emph{coincidence set} $\Delta(u)$ as 
$$\Delta(u):=\big\{(x',0)\in B_1'\,:\, u(x',0)=0\big\},$$ 
and the \emph{free boundary $\Gamma_u$ of $u$} as the topological boundary of $\Delta(u)$ in $B_1'$. 
\smallskip

We say that $u$ has a \emph{unique blow-up limit at $x_0$}, if the sequence (the family) of functions
$$
\ds u_{x_0,r}:B_r\to\R,\qquad u_{x_0,r}(x)=\|u(r\cdot+x_0)\|_{L^2( \partial B_1)}^{-1}u(rx+x_0),
$$
converges weakly in $H^1(B_1, x_d^{1-2s} \mathcal L^d)$ to an admissible function $u_{x_0}$.
\smallskip

Here, building on the rectifiability of the free boundary, recently proved by Focardi and Spadaro (see Theorem~\ref{thm:FoSpa} below), and the classification of the two-dimensional homogeneous solutions, we prove that, at almost-every point of the free boundary, the blow-up is unique and corresponds to certain two-dimensional profiles with homogeneities $2m$, $2m-1+s$, or $2m+2s$. In particular, we answer a question left open in a recent paper of Focardi and Spadaro (see \cite{fosp2,fosp2-cor}). 
Our main result is the following.

\begin{teo}
\label{t:main}
Let $u$ be a solution of the lower dimensional obstacle problem \eqref{pbm:thin}. Then, for $\HH^{d-2}$-almost every $x_0\in \Gamma(u)$, the following does hold: 
\begin{enumerate}[(i)]
\item $u$ has a unique blow-up limit $u_{x_0}$ at $x_0$;
\item such blow up is either $2m$, $2m-1+s$, or  $2m+2s$ homogeneous, for some $m\in\N$; 
\item the blow-up limit $u_{x_0}:\R^d\to\R$ is of the form 
$$u_{x_0}(x',x_d)=\bar u (x' \cdot e, x_{d})\quad\text{for some vector}\quad e \in \mathbb S^{d-2}\subset \R^{d-1},$$
and $\bar u:\R^2 \to \R$ is a homogeneous solution of the lower dimensional obstacle problem \eqref{pbm:thin} in dimension two.
\end{enumerate}
\end{teo}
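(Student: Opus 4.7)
I would combine three ingredients: the Focardi--Spadaro rectifiability of $\Gamma(u)$ (Theorem~\ref{thm:FoSpa}), the classification of two-dimensional $\lambda$-homogeneous solutions of \eqref{pbm:thin}, and a Weiss-type epiperimetric inequality at each of the admissible frequencies. By Theorem~\ref{thm:FoSpa}, $\Gamma(u)$ decomposes, up to an $\HH^{d-2}$-negligible set, into $(d-2)$-rectifiable strata indexed by the Almgren frequency $\lambda\in\{2m,\,2m-1+s,\,2m+2s:m\in\N\}$. For $\HH^{d-2}$-a.e.\ $x_0\in\Gamma(u)$ the stratum containing $x_0$ is $(d-2)$-dimensional and admits an approximate tangent plane $V_{x_0}$, while Almgren's monotonicity ensures that any sub-sequential limit $u_{x_0}$ of $\{u_{x_0,r}\}_{r\downarrow 0}$ is a nontrivial $\lambda$-homogeneous solution of \eqref{pbm:thin}. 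The rectifiability of the stratum, combined with the $\HH^{d-2}$-density of $\Gamma(u_{x_0})\cap V_{x_0}$ at the origin, forces $V_{x_0}$ to lie in the singular set of $u_{x_0}$.

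Next, a standard argument combining $\lambda$-homogeneity with Almgren's monotonicity (applied at points of $V_{x_0}$) shows that a $\lambda$-homogeneous solution whose singular set contains $V_{x_0}$ is invariant under translations along $V_{x_0}$. This yields
$$u_{x_0}(x',x_d) = \bar u(x'\cdot e,\,x_d)$$
for some $e\in\mathbb{S}^{d-2}\subset V_{x_0}^\perp$ and some 2D $\lambda$-homogeneous solution $\bar u$ of \eqref{pbm:thin}. The classification of two-dimensional homogeneous solutions then restricts $\lambda$ to $\{2m,\,2m-1+s,\,2m+2s\}$ and confines $\bar u$ to a discrete set of explicit profiles, giving (ii) and the structural content of (iii).

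To obtain (i), I would upgrade this subsequential convergence to convergence of the full family through a Weiss-type epiperimetric inequality at the detected homogeneity $\lambda=\lambda_{x_0}$. Such an inequality implies that the corresponding Weiss energy $W_\lambda(u,\cdot,x_0)$ converges to its limit at a power rate in $r$, from which the standard telescoping estimate
$$\int_0^1 \|\partial_r u_{x_0,r}\|_{L^2(\partial B_1)}\,\frac{dr}{r}<\infty$$
follows, yielding uniqueness of the blow-up as $r\to 0^+$. The main obstacle is precisely this epiperimetric step at the non-integer frequencies $\lambda = 2m+2s$ and $\lambda = 2m-1+s$: the inequality must be formulated and proved in a form compatible with the 2D-symmetric profiles singled out above and with the rectifiable stratification underlying the selection of $x_0$. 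The case $\lambda = 2m$ is covered by \cite{fosp2}, while the remaining two families force one to work directly with the explicit 2D profiles $\bar u$ and establish quantitative closeness to them, and this is where I expect the bulk of the analytic work to concentrate.
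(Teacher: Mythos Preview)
Your reduction to two dimensions---Focardi--Spadaro rectifiability, existence of an approximate tangent plane $T_{x_0}$ at $\HH^{d-2}$-a.e.\ point of each frequency stratum, and the argument that any blow-up is invariant along $T_{x_0}$ via Almgren monotonicity plus homogeneity---coincides with the paper's Splitting Lemma (Lemma~\ref{l:main}) and the first half of its proof of Theorem~\ref{t:main}.

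The difference is in how you establish (i). You propose a Weiss-type epiperimetric inequality at each admissible $\lambda$, correctly identifying $\lambda=2m-1+s$ (for $m\ge 2$) and $\lambda=2m+2s$ as the cases requiring new work. The paper avoids this machinery entirely. Once every blow-up at $x_0$ is forced to be $\bar b(x'\cdot e,x_d)$ with $e\perp T_{x_0}$, the classification of two-dimensional $\lambda$-homogeneous solutions leaves only finitely many candidates, and the paper disposes of each case by elementary means. For $\lambda=2m$ there is a single profile, so nothing remains to prove. For $\lambda=2m-1+s$ there are two profiles $b_1,b_2$, and a connectedness argument suffices: if subsequences $r_j,t_j\to 0$ produced $b_1$ and $b_2$ respectively, the continuous map $r\mapsto\int_{\partial B_1}u_{x_0,r}\,b_1$ would, by the intermediate value theorem, yield a third sequence $q_j$ whose limit is neither $b_1$ nor $b_2$, contradicting the dichotomy. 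For $\lambda=2m+2s$ the paper argues directly from the explicit two-dimensional profile: its weighted normal derivative $|x_d|^{1-2s}\partial_{x_d}\bar b$ is strictly negative on $\{x_2=0\}\setminus\{0\}$, which is incompatible with non-contact points of $u_{x_0,r_n}$ accumulating off the invariance plane; hence the blow-up is determined by $T_{x_0}$ alone.

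Thus the paper's route is strictly more elementary---no decay rate, no epiperimetric inequality, only discreteness of the two-dimensional classification plus continuity in $r$. Your route would in principle yield more (a quantitative convergence rate), but the analytic work you anticipate at the non-integer frequencies is precisely what the paper's argument is designed to sidestep.
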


\begin{oss}[Lower dimensional obstacle problem {\small VS} minimal surfaces/harmonic maps]\label{r:minimal_surf} 
Our proof of Theorem \ref{t:main} is based on a very general dimension-reduction lemma (Lemma \ref{l:main}), which allows to reduce the question of the uniqueness of the blow-up limit to the analysis of the blow-up limits with a maximal number of symmetries. In fact, our argument is very general and can be applied in different contexts, for example, to the singular sets of minimal surfaces and harmonic maps. On the other hand, we notice that, in the case of the lower-dimensional (thin) obstacle problem, the blow-up limits with a maximal number of symmetries are completely described (for instance, in the case of the thin-obstacle problem, the homogeneous two-dimensional solutions are explicit), while for minimal surfaces and harmonic maps the singular blow-ups of minimal dimension (that is, with maximal number of symmetries) are not classified. 
However, combining the analogous version of Lemma \ref{l:main} for minimal surfaces and harmonic maps with the work of L. Simon \cite{Simon}, it is still possible to deduce uniqueness of the blow-up at almost every point of the singular set from its rectifiability (that is from Naber-Valtorta's result \cite{NaVa}). This is precisely the content of \cite[Remark 1.14]{Simon} and we will briefly explain it in Appendix \ref{a}.
\end{oss}


\section{Main lemma and proof of Theorem \ref{t:main}}

For every point $x_0\in B_1$, we define the {Almgren's frequency function}
$$
\ds N(u,x_0,r):=\frac{r\int_{B_r(x_0)}x_d^{1-2s} |\nabla u|^2\,dx}{\int_{\de B_r(x_0)}x_d^{1-2s} u^2\,d\HH^{d-1}}\,.
$$
The function $r\mapsto N(u,{x_0},r)$ is monotone nondecreasing in $r$ (see \cite{acs}), so that it is well defined the limit 
\begin{equation}\label{e:freq_in_zero}
N(u,x_0,0):=\lim_{r\to 0}N(u,{x_0},r).
\end{equation}
In particular, the free boundary can be decomposed according to the value of the frequency function at $r=0$. We denote the set of points of frequency $\lambda\in\R$ by  
$$\freq_{\lambda}(u):=\big\{x_0\in \Gamma(u)\,:\,N(u,x_0,0)=\lambda\big\}.$$
Our main lemma is the following.
%
\begin{lemma}[Splitting lemma]\label{l:main}
Let $u$ be a solution of the lower dimensional obstacle problem. Let $\lambda\in\R$ and $x_0\in \freq_{\lambda}(u)$ be a point of frequency $\lambda$ for which there exists a linear subspace $T_{x_0}$ of $\R^d$ satisfying the following property: 
\begin{itemize}
	\item [(SP)]For every $y_0\in T_{x_0}$ and sequence of radii $r_n$ converging to $0$, there is a sequence of points $y_n$ converging to $y_0$ such that $\displaystyle y_n\in \mathcal S_\lambda(u_{x_0,r_n})$, for every $n$.
\end{itemize}
Then, any blow-up limit $b$ of $u$ at $x_0$ is invariant in the direction of $T_{x_0}$, that is, 
\begin{equation}\label{e:invariance0}
b(x+y_0)=b(x)\quad\text{for every}\quad x\in\R^d\quad\text{and every}\quad y_0\in T_{x_0}.
\end{equation}
\end{lemma}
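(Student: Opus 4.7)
The plan is to upgrade hypothesis (SP) into the statement that every $y_0\in T_{x_0}$ is a frequency-$\lambda$ point of any blow-up $b$ of $u$ at $x_0$. Once this is done, the rigidity part of Almgren's monotonicity formula will force $b$ to be $\lambda$-homogeneous about both the origin and $y_0$, and the classical two-point homogeneity argument will extract translation invariance along $T_{x_0}$.

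Let $r_n\to 0$ be a sequence for which $u_{x_0,r_n}\to b$. By the regularity theory for the thin obstacle problem (see \cite{acs}), after passing to a subsequence one has strong convergence $u_{x_0,r_n}\to b$ in $H^1_{\mathrm{loc}}(\R^d,x_d^{1-2s}\mathcal L^d)$ and in $C^{1,\alpha}_{\mathrm{loc}}$ on each side of $\{x_d=0\}$; Almgren's monotonicity applied at $x_0$ then gives $N(b,0,\rho)=\lambda$ for every $\rho>0$, so $b$ is $\lambda$-homogeneous about the origin and solves \eqref{pbm:thin} on all of $\R^d$.

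Now fix $y_0\in T_{x_0}$ and $\rho>0$. By (SP) there exist $y_n\to y_0$ with $y_n\in\mathcal S_\lambda(u_{x_0,r_n})$; setting $z_n:=x_0+r_n y_n$, we have $z_n\in\mathcal S_\lambda(u)$ and $z_n\to x_0$. Scaling invariance of the frequency gives
\[
N(u_{x_0,r_n},y_n,\rho)=N(u,z_n,r_n\rho)\ge\lambda,
\]
the last inequality from monotonicity of $N(u,z_n,\cdot)$ and $N(u,z_n,0)=\lambda$. For the matching upper bound I would fix a small $r>0$, apply monotonicity once more to bound $N(u,z_n,r_n\rho)\le N(u,z_n,r)$ for $n$ large, pass to the limit via continuity of the frequency in the base point (at fixed radius) to obtain $\limsup_n N(u,z_n,r)\le N(u,x_0,r)$, and finally let $r\to 0$ to exploit $N(u,x_0,0)=\lambda$. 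The strong convergence $u_{x_0,r_n}\to b$ then transfers the identity $\lim_n N(u_{x_0,r_n},y_n,\rho)=\lambda$ to $N(b,y_0,\rho)=\lambda$, for every $\rho>0$.

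The rigidity case of Almgren's formula now gives that $b$ is $\lambda$-homogeneous about $y_0$ as well. Combining the two homogeneities, $b(y_0+t(x-y_0))=t^\lambda b(x)=b(tx)$, so $b(z+(1-t)y_0)=b(z)$ for all $z\in\R^d$ and $t>0$; letting $t$ vary yields invariance under translations by $sy_0$ for every $s$ in an open interval around $0$, which by the group property of invariances extends to every real multiple of $y_0$, and running $y_0$ over $T_{x_0}$ gives \eqref{e:invariance0}. The principal technical obstacle I anticipate lies in the two frequency limit-passages above---strong convergence of the weighted gradient integral together with convergence of the spherical $L^2$-trace under the degenerate weight $x_d^{1-2s}$, and continuity of $N(u,\cdot,r)$ in the base point at a fixed scale---both of which must be invoked with care because of the thin set $\{x_d=0\}$ but are ultimately standard consequences of the $C^{1,\alpha}$ theory in \cite{acs}.
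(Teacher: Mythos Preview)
Your proposal is correct and follows the same three-step skeleton as the paper: (1) establish $N(b,y_0,\rho)=\lambda$ for all $\rho>0$, (2) invoke the rigidity in Almgren's monotonicity to get $\lambda$-homogeneity of $b$ about $y_0$, and (3) combine homogeneity about $0$ and $y_0$ to extract translation invariance.

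The one genuine point of difference is in the upper bound for $N(b,y_0,\cdot)$. You pull everything back to $u$: from $N(u_{x_0,r_n},y_n,\rho)=N(u,z_n,r_n\rho)\le N(u,z_n,r)$ you pass to the limit in the base point (at fixed scale $r$) and then send $r\to0$. The paper instead works intrinsically in $b$: using that $b$ is already $\lambda$-homogeneous about the origin, one has $N(b,y_0,0)=N(b,ry_0,0)$ for every $r>0$, and letting $r\to0$ together with upper semicontinuity yields $N(b,y_0,0)\le N(b,0,0)=\lambda$; the paper then closes the argument by computing $\lim_{R\to\infty}N(b,y_0,R)=N(b,0,1)=\lambda$ via the same homogeneity and invoking monotonicity. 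Your route is slightly more direct (it gives $N(b,y_0,\rho)=\lambda$ at every scale in one shot) but leans on continuity of $N(u,\cdot,r)$ in the base point for the fixed original solution, while the paper's route exploits the homogeneity of the limit $b$ and only needs the abstract upper-semicontinuity of the frequency under strong convergence. Both are legitimate and rest on the same list of properties of $N$ recorded in the paper's remark following the lemma.
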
	
\begin{oss}
We notice that in the proof of Lemma \ref{l:main}, we use only the following properties of the frequency function $N$:

\noindent$\bullet$ {\bf Monotonicity.} For every $x_0\in B_1$, the function $r\mapsto N(u,x_0,r)$ is non-decreasing.

\noindent$\bullet$ {\bf Scaling.} For $y_0\in B_1$, $s>0$ and $r>0$, such that $u_{x_0,r}$ is defined on the ball $B_s(y_0)$, we have
\begin{equation}\label{e:scaling}
N(u_{x_0,r},y_0,s)=N(u,x_0+ry_0,sr).
\end{equation}

\noindent$\bullet$ {\bf Continuity.} For every fixed $r>0$, the function $(u,x)\mapsto N(u,x,r)$, defined on $H^1(B_1)\times \R^d$ is continuous in the strong $H^1(B_1)\times \R^d$ topology. 

\noindent$\bullet$ {\bf Characterization of the homogeneous functions.}
Suppose that the point $x_0\in\R^d$ and the function $u:\R^d\to\R$ are such that 
$$N(u,x_0,r)=\lambda\quad\text{for every}\quad r>0.$$
Then $u$ is $\lambda$-homogeneous with respect to $x_0$, that is, 
$$u(x_0+rx)=r^\lambda u(x_0+x)\quad\text{for every}\quad x\in\R^d\quad\text{and}\quad r>0.$$
We also notice that the {\it monotonicity} property gives the existence of $N(u,x_0,0)$ (see \eqref{e:freq_in_zero}). Moreover, the {\it continuity} property implies the following:

\noindent$\bullet$ {\bf Upper semicontinuity.}
Suppose that $u_n:B_1\to\R$ is a sequence of functions converging strongly in $H^1(B _1)$ to a function $u_\infty\in H^1(B_1)$. Suppose that $x_n\in B_1$ be a sequence converging to some $x_\infty\in B_1$. Then
we have that 
\begin{equation}\label{e:usc}
N(u_\infty,x_\infty,0)\ge \limsup_{n\to\infty}N(u_n,x_n,0).
\end{equation}
Indeed, using the monotonicity of the function $r\mapsto N(u,x,r)$, we have 
$$N(u_\infty,x_\infty,r)=\lim_{n\to\infty}N(u_n,x_n,r)\ge \limsup_{n\to\infty}N(u_n,x_n,0).$$
Taking, the limit as $r\to0$, we get \eqref{e:usc}.
\end{oss}	
\begin{proof}[Proof of Lemma \ref{l:main}]
Let $b$ be any blow-up limit of $u$ at $x_0$. 
 Then, there is a sequence $r_n\to0$ such that $u_{r_n,x_0}$ converges to $b$ both strongly in $H^1_{\rm loc}$ and in $C^{1}_{\rm loc} ( \{x_d\geq 0\})$. 

We first claim that 
\begin{equation}\label{e:freq}
N(b,y_0,0)= \lambda\quad\text{for every}\quad y_0\in T_{x_0}.
\end{equation}
Indeed let $y_0\in T_{x_0}$ be fixed and let $ S_{\lambda}(u_{x_0,r_n})\ni y_n\to y_0$ be the sequence of points whose existence is guaranteed by (SP). In particular, since $u_{x_0,r_n}(y_n)=0$ and $u_{x_0,r_n}$ converges uniformly to $b$, we have that $b(y_0)=0$. 
\noindent  By the upper semi-continuity of $N$ we have that $N(b,y_0,0)\ge \lambda$. Indeed, since  $y_n\in \mathcal S_\lambda(u_{x_n,r_n})$ and $u_{x_0,r_n}$ converges to $b$ strongly in $H^1(B_1)$, we have 
$$N(b,y_0,0)\ge \limsup_{n\to\infty} N(u_{x_0,r_n},y_n,0)=\lambda.$$
On the other hand, $N(b,y_0,0)\le \lambda$. Indeed, by \eqref{e:scaling} and the fact that $b$ is homogeneous, we have that 
$$N(b,y_0,0)=\lim_{s\to0}N(b,y_0,s)=\lim_{s\to0}N(b_{0,r},y_0,s)=\lim_{s\to0}N(b,ry_0,rs)=N(b,ry_0,0),$$
for every $r>0$. In particular, this means that 
$$N(b,y_0,0)=\lim_{r\to0}N(b,ry_0,0)\le N(b,0,0)=\lambda,$$
where the inequality follows by the upper semi-continuity of the frequency function.
This concludes the proof of \eqref{e:freq}.

We next prove that the function $b$ is invariant in any direction $y \in T_{x_0}$, that is 
\begin{equation}\label{e:invariance}
b(x+ty)=b(x)\quad\text{for every}\quad x\in\R^d.
\end{equation}
Using the homogeneity of $b$ and \eqref{e:scaling}, for every $r>0$ we have that 
$$N(b,y,R)=N\Big(b_{0,R},\frac{y}{R},1\Big)=N\Big(b,\frac{y}{R},1\Big).$$
Taking the limit as $R\to\infty$, we get that 
$$
\lim_{R\to\infty}N(b,y,R)=\lim_{R\to\infty}N\left(b,\frac{y}{R},1\right)=N(b,0,1)=\lambda.
$$
In particular, together with \eqref{e:freq}, this implies that 
$$N(b,y,r)=\lambda\quad\text{for every}\quad r>0,$$
and so, $b$ is homogeneous with respect to $y$:
$$b(y+rx)=r^\lambda b(y+x)\quad\text{for every}\quad r>0.$$
Hence, for every  $x\in\R^d$ we can use the homogeneity with respect to $0$ and $y$ to obtain
$$b(x+y)=2^{\lambda}b\Big(\frac{x+y}{2}\Big)=2^{\lambda}b\Big(y+ \frac{x-y}{2}\Big)=b(x).$$
This concludes the proof of \eqref{e:invariance}. 
\end{proof}	

In the proof of Theorem \ref{t:main} we will use Lemma \ref{l:main} and the following recent result by Focardi and Spadaro, which we report here for the reader's convenience.

\begin{teo}[Focardi-Spadaro; see Theorem\,1.2 and Theorem\,1.3 of \cite{fosp2}]\label{thm:FoSpa}
	Let $u$ be a solution of the lower dimensional obstacle problem \eqref{pbm:thin} in $B_1$. Then $\Lambda(u)$ is a set of finite perimeter and there exists $\Sigma(u) \subseteq \Gamma(u)$ with Hausdorff dimension at most $n-2$ such that
	$$
	N(u,x_0,0) \in \{2m, 2m-1+s, 2m+2s\}_{m\in \N \setminus \{0\}} \qquad \text{for every}\qquad x_0\in \Gamma(u) \setminus \Sigma(u).
	$$
\end{teo}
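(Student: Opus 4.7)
The plan is to prove two statements: finite perimeter of $\Lambda(u)$, and the frequency classification at every free boundary point outside an exceptional set $\Sigma(u)\subseteq\Gamma(u)$. Once finite perimeter is in hand, one has $\dim_{\mathcal H}\Gamma(u)\leq n-2$, so the bound $\dim_{\mathcal H}\Sigma(u)\leq n-2$ will be automatic from the inclusion $\Sigma(u)\subseteq\Gamma(u)$.

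For the finite perimeter of $\Lambda(u)$, the starting point is the universal lower bound $N(u,x_0,0)\geq 1+s$ at every $x_0\in\Gamma(u)$ (from a standard subsolution argument for the thin Laplacian), together with the resulting optimal growth $|u(x)|\leq C|x-x_0|^{1+s}$ via Almgren's monotonicity. I would then establish two-sided density estimates for $\Lambda(u)$ at every free boundary point, based on a blow-up analysis at the minimal frequency $1+s$, whose homogeneous solutions are classified explicitly (up to rotation, they are proportional to $\operatorname{Re}(x_{d-1}+i|x_d|)^{1+s}$). Combining these density estimates with a Vitali covering and a Naber-Valtorta quantitative stratification of the Almgren density should give $\mathcal H^{n-2}(\Gamma(u)\cap K)<\infty$ for every compact $K\Subset B_1'$, i.e.\ $\ind_{\Lambda(u)}\in BV_{\mathrm{loc}}(B_1')$.

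For the frequency gap, the core input is a classification of all $\lambda$-homogeneous global solutions of \eqref{pbm:thin}: such a solution should exist only if $\lambda\in\{2m,\,2m-1+s,\,2m+2s\}_{m\geq 1}$. Writing $u(r\theta)=r^\lambda\phi(\theta)$ in polar coordinates, the spherical trace $\phi$ solves a Signorini-type eigenvalue problem on $\mathbb S^{d-1}$ with weight $|\theta_d|^{1-2s}$ and unilateral constraint on $\mathbb S^{d-2}$. In dimension $d=2$, separation of variables together with the Athanasopoulos-Caffarelli-Salsa $C^{1,\alpha}$ boundary regularity yields exactly the three families of eigenvalues. For $d\geq 3$ I would induct via Lemma~\ref{l:main}: any $\lambda$-homogeneous solution is either translation-invariant in a direction of $\{x_d=0\}$ (reducing to dimension $d-1$) or its symmetries force a further reduction to the $d=2$ case. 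Defining $\Sigma(u)$ as the set of $x_0\in\Gamma(u)$ with $N(u,x_0,0)\notin\{2m,\,2m-1+s,\,2m+2s\}$, the classification applied to any blow-up at $x_0$ forces $\Sigma(u)$ to be empty wherever a classical blow-up exists, and the dimension bound then follows from the finite perimeter step.

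The main obstacle is the classification of homogeneous solutions for $d\geq 3$: the Signorini spectrum on $\mathbb S^{d-1}$ is not explicit, and ruling out eigenvalues in the gaps $(2m-1+s,\,2m)\cup(2m,\,2m+2s)\cup(2m+2s,\,2m+2)$ requires an induction via Lemma~\ref{l:main} combined with the explicit $d=2$ description. Carrying out this induction — in particular ensuring that every non-translation-invariant homogeneous solution in dimension $d$ admits the symmetries needed to reduce to dimension $d-1$ — is the technical core of the argument.
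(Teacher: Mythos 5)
The first thing to say is that the paper contains no proof of this statement: Theorem \ref{thm:FoSpa} is quoted verbatim from Focardi--Spadaro (note the notation $\Lambda(u)$ and $n$, which the present paper never even defines) and is used as a black box; the only feature of it that enters the proof of Theorem \ref{t:main} is the consequence $\HH^{d-2}(\Sigma(u))=0$ of \cite[Theorem 1.3]{fosp2}. So your proposal has to be judged against \cite{fosp2,fosp2-cor}, and there it has two genuine gaps. First, the ``two-sided density estimates for $\Lambda(u)$ at every free boundary point'' you invoke do not exist: at a point of frequency $2m$ the coincidence set has vanishing $\HH^{d-1}$-density (this is exactly the Garofalo--Petrosyan characterization of singular points), so no uniform lower density bound can hold along all of $\Gamma(u)$. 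They are also not needed: local finiteness of $\HH^{d-2}(\Gamma(u)\cap K)$ --- obtained in \cite{fosp2} by frequency-based quantitative-stratification/Reifenberg arguments in the spirit of \cite{NaVa}, i.e.\ the other half of your sentence --- already yields finite perimeter of $\Lambda(u)$ in $B_1'$ via Federer's criterion. Note also that the implication only runs in that direction: finite perimeter by itself controls the reduced boundary, not the topological boundary $\Gamma(u)$, so ``once finite perimeter is in hand, $\dim_{\HH}\Gamma(u)\le n-2$'' is backwards, although harmless here since you in fact derive the perimeter bound from the $\HH^{d-2}$-finiteness.

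The more serious gap is the frequency classification. The claim that a nontrivial $\lambda$-homogeneous global solution exists only for $\lambda\in\{2m,\,2m-1+s,\,2m+2s\}$ is not a theorem in dimension $d\ge 3$: the classification of admissible homogeneities is open, and this is precisely why Theorem \ref{thm:FoSpa} is only an almost-everywhere statement. Your proposed induction through Lemma \ref{l:main} does not run, because the lemma requires the splitting hypothesis (SP) --- a full $(d-2)$-dimensional plane of accumulating points of the \emph{same} frequency --- and an arbitrary homogeneous solution in dimension $d\ge 3$ has no reason to satisfy it: its set of frequency-$\lambda$ points may be lower dimensional, in which case no reduction to dimension $d-1$ is available. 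The known argument goes in the opposite order: first one proves local finiteness and rectifiability of $\Gamma(u)$; then at $\HH^{d-2}$-a.e.\ point the approximate tangent plane furnishes (SP), every blow-up becomes two-dimensional by the splitting argument, and one only needs the explicit two-dimensional classification \cite{gape,fosp2}, whose homogeneities are exactly $2m$, $2m-1+s$, $2m+2s$. With your definition of $\Sigma(u)$, your classification claim would force $\Sigma(u)=\emptyset$ at every point where a blow-up exists (i.e.\ everywhere, by Almgren monotonicity), a statement far stronger than anything currently known; so the step you flag as ``the technical core'' is not a technicality but an open problem, and the a.e.\ formulation of the theorem exists precisely to avoid it.
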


\begin{proof}[\bf Proof of Theorem~\ref{t:main}]
Let 
$$\Sigma (u):=\Gamma(u)\setminus\left(\left(\bigcup_{m=1}^\infty \mathcal S_{2m}\right)\cup \left(\bigcup_{m=1}^\infty \mathcal S_{2m-1+s}\right)\cup \left(\bigcup_{m=1}^\infty \mathcal S_{2m+2s}\right)\right).$$
By \cite[Theorem 1.3]{fosp2}, we have that $\HH^{d-2}(\Sigma(u))=0$. Thus, it is sufficient to prove the claim for almost-every $x_0\in\mathcal S_{\lambda}$, where $\lambda=2m, 2m-1+s$ or $2m+2s$. Moreover, by \cite[Theorem 1.2]{fosp2}, we have that the free boundary $\Gamma(u)$ is $C^1$-rectifiable and so is each of the sets $\mathcal S_{2m-1+s}$, $\mathcal S_{2m}$ and $\mathcal S_{2m+2s}$ (for every $m\in\N$). In particular, this means that for almost every point $x_0$ of these sets, there exists a unique $(d-2)$-dimensional approximate tangent plane $T_{x_0} \subseteq \R^{d-1} \times \{0\}$, namely   
\begin{equation}
\label{eqn:approx-tang2}
\HH^{d-1} | \big(\mathcal S_{\lambda}(u_r)\cap B_1\big)\rightharpoonup \HH^{d-1} | ({T_{x_0} \cap B_1}) 
\end{equation} 
as locally finite measures.
 Hence, the splitting property hypothesis (SP) of Lemma \ref{l:main} is satisfied. Then Lemma \ref{l:main} implies that every blow-up limit $b$ of $u$ at $x_0$ is invariant with respect to a $(d-2)$-dimensional plane $T_{x_0}$. This means, that $b$ depends only on two variables: $x\cdot e$ and the last coordinate $x_d$, $e$ being (one of) the normal vector to $T_{x_0}$ in the hyperplane $\R^{d-1}$. Precisely, $b$ is of the form 
\begin{equation}
\label{form}
b(x)=\bar b(x\cdot e,x_d),
\end{equation}
where $\bar b$ is a homogeneous solution of the lower dimensional obstacle problem in dimension two.

We now consider the three cases $\lambda=2m$, $\lambda=2m-1+s$ and $\lambda=2m+2s$ separately. Indeed, we first notice that there is only one (up to a multiplicative constant) two-dimensional solution of the lower-dimensional obstacle problem of homogeneity $2m$. In particular, if $\lambda=2m$, then the  blow-up is unique and two-dimensional.

Let now $\lambda=2m-1+s$. In this case there are two two-dimensional homogeneous solutions (see for instance \cite{gape}) and so, two possible blow up limits of $u$ at $x_0$. We call them $b_1$ and  $b_2$. In order to prove the uniqueness of the blow-up as in statement (i) we have to exclude that, for two different sequences $r_j \to 0$ and $t_j \to 0$, the blow-up is $b_1$ and $b_2$, respectively. Indeed, taking the scalar product of $u_{x_0,r}$ with $b_1$ we see that
 $$\lim_{j\to \infty} \int_{\partial B_1} u_{x_0,r_j} b_1 =1\qquad\text{and}\qquad 
 \lim_{j\to \infty} \int_{\partial B_1} u_{x_0,t_j} b_1 =\int_{\partial B_1} b_1  b_2<1;$$ 
 hence, for every $j$, there exists $q_j\in (r_j, t_j)$ such that 
 $$
 \lim_{j\to \infty} \int_{\partial B_1} u_{x_0,q_j} b_1 =\frac 12 \Big(1+\int_{\partial B_1} b_1  b_2 \Big).
 $$
 This gives a contradiction. Indeed, up to a subsequence, $ u_{x_0,q_j} $ converges to a blow-up limit, which by Lemma \ref{l:main} should be $b_1$ or $b_2$. 



It now remains the case $\lambda=2m+2s$.  Fix $x_0\in\mathcal S_{2m+2s}(u)$ that admits a $(d-2)$-dimensional approximate tangent plane $T_{x_0}\subset \R^{d-1}\times \{0\}$ and such that, by \eqref{form}, every blow-up limit $b$ is  is of the form $b(x)=\bar b(x\cdot e,x_d),$ where $\bar b$ is a $(2m+2s)$-homogeneous solution in dimension two. It is sufficient to prove that $e$ is a normal vector to $T_{x_0}$. Let 
$$H_b:=\{x'\in\R^{d-1}\,:\, x'\cdot e=0\},$$
and suppose that there is a point $y_0\in T_{x_0}\setminus H_b\subset \R^{d-1}$. Without loss of generality, we may assume that $|y_0|=\sfrac12$. Let $u_{x_0,r_n}$ be a blow-up sequence converging to $b$. By definition of the tangent plane, there is a sequence of points $y_n\in B_1\cap \mathcal S_{2m+2s}(u_{r_n,x_0})$ such that $y_n\to y_0$. Since, $y_n$ are on the free boundary, there is a sequence of points $z_n$ in the non-contact set of $
u_{x_0,r_n}$ $\Big($that is, $u_{r_n,x_0}(z_n,0)>0$ and, as a consequence, $\frac{\partial u_{r_n,x_0}}{\partial x_d}(z_n,0)=0\Big)$ such that $z_n\to y_0$. 

When $s=\sfrac12$, we use the classification of the solutions in dimension two (see \cite{gape}), which implies that $\bar b:\R^2\to\R$ can be written (up to a positive multiplicative constant) in polar coordinates as
	\begin{equation}
	\label{eqn:blow-up-s-1/2}
	\bar b(r,\theta)=r^{2m+1}\sin\big(-(2m+1)\theta \big)\qquad\text{in}\qquad \{x_2\ge 0\},
	\end{equation}
	and it is reflected in an even way in the half-plane $\{x_2< 0\}$.
{In particular, $\ds \frac{\partial b}{\partial x_d}(y_0,0)<0$.
	On the other hand, the blow-up sequence $u_{x_0,r_n}$ converges in $C^1$ to the blow-up $b$ (see \cite{acs}). Thus, 
	$$\frac{\partial b}{\partial x_d}(y_0,0)=\lim_{n\to\infty}\frac{\partial u_{x_0,r_n}}{\partial x_d}(z_n,0)=0,$$
	which is a contradiction. In conclusion, $T_{x_0}=H_b$, so the vector $e$ and the blow-up $b$ are uniquely determined by the tangent plane $T_{x_0}$. This concludes the analysis of $\mathcal S_{2m+2s}$ in the case $s=\sfrac12$.
}
		
For general $s$, a nice formula as \eqref{eqn:blow-up-s-1/2} is not available but the two-dimensional solutions are described in detail in \cite[Appendix A.1]{fosp2} and the uniqueness of the blow-up follows by a similar argument. Indeed, by \cite[equation (A.4)]{fosp2}, up to a multiplicative constant, we have that 
$$\bar b(x_1,x_2) = |x_2|^{2s}\big(-1+ O(|(x- y_0|^2)\big).$$ 
This means that, at the point $y_0$, we have 
$$|x_d|^{1-2s}\frac{\partial b}{\partial x_d}(y_0,0):=\lim_{x_d\to 0}|x_d|^{1-2s}\frac{\partial b(y_0,x_d)}{\partial x_d}<0.$$
On the other hand, by \cite[Theorem 2.1]{fosp2} we have that 
$$|x_d|^{1-2s}\frac{\partial b}{\partial x_d}(y_0,0)=\lim_{n\to\infty}|x_d|^{1-2s}\frac{\partial u_{x_0,r_n}}{\partial x_d}(z_n,0)=0,$$
where the last inequality is due to the fact that $z_n$ is not on the contact set (see for instance  \cite[Corollary 2.4]{fosp2}).
This is a contradiction. Thus, also in the case $s\neq\sfrac12$ and $\lambda=2m+2s$, the blow-up is unique (as it is uniquely determined by $T_{x_0}$). This concludes the proof. 
		\end{proof}
		
		\appendix

	\section{About Remark {\ref{r:minimal_surf}}}\label{a}
	 In this section we elaborate a bit more on Remark \ref{r:minimal_surf} in the particular case of minimal surfaces (although the same holds for harmonic maps). Following the notations of \cite{Simon}, we denote with $\mathcal M$ a multiplicity one class of $n$-dimensional minimal surfaces and we denote with $\sing\,M$, the singular set of $M\in \mathcal M$. Moreover we let
	$$
	m:=\max \{\dim \sing\,M\,:\, M\in \mathcal M\}\,.
	$$  	
	Thanks to a result of Naber-Valtorta \cite{NaVa}, we know that $\sing\,M$ has finite $\HH^m$-volume and it is locally $\HH^m$-rectifiable. Next, let us denote with $\Theta_M(x)$ the density of $M\in \mathcal M$ at a point $x$, and recall that a consequence of \L ojasiewicz inequality for minimal surfaces is that the set of admissible densities is discrete, that is, 
	$$\big\{ \Theta_\bC(0)\,:\, \bC \mbox{ stationary cone with }\dim (\sing\,\bC)=m \big\}=\{\alpha_1,\dots,\alpha_N\},$$ 
	with $\alpha_1<\dots<\alpha_N$ (see \cite[4.3 Lemma]{Simon}). Consider the sets
	$$
	\mathcal S_{j}:=\big\{ x\in \sing\,M\,:\, \Theta_M(x)=\alpha_j \big\}\,\qquad j=1,\dots,N\,,
	$$
	and notice that, by standard stratification arguments, 
	\begin{equation}\label{e:ms1}
	\HH^m\left(\sing\,M\setminus \Big(\bigcup_{j=1}^N \mathcal S_j\Big)\right)=0\,.
	\end{equation}
	As a consequence (of the analogous) of Lemma \ref{l:main}, applied to this case, we know that
	\begin{itemize}
		\item[(MS)] for every point $x\in \mathcal S_j$ for which the approximate tangent space $T_x$ to $\mathcal S_j$ at $x$ exists, all the tangent cones $\bC$ to $M$ at $x$ are such that $\dim(\sing\,\bC)=m$ and moreover $T_x\subset \bC$.
	\end{itemize}
	Thanks to the Naber-Valtorta rectifiability result, this is the case for $\HH^m$-a.e. point of $\mathcal S_j$, that is
	\begin{itemize}
	\item[(MS')] for $\HH^m$-a.e. $x\in \mathcal S_j$, all the tangent cones $\bC$ to $M$ at $x$ are such that 
	$$\dim(\sing\,\bC)=m\qquad\text{and}\qquad T_x\subset \sing\,\bC.$$
	\end{itemize}
	It follows from (MS') and \eqref{e:ms1}, combined with standard arguments that, for $\HH^m$-a.e. $x\in \sing\,M$, there is an $m$-dimensional subspace $L_x$ such that, for every $\eps>0$,
	\begin{gather}
	B_1(0)\cap \eta_{x,\sigma}(\sing\,M)\subset \mbox{the $\eps$-neighborhood of $L_x$} \label{e:ms2}\\
	B_1(0)\cap L_x\subset \mbox{the $\eps$-neighborhood of $\eta_{x,\sigma}(\sing\,M)$} \label{e:ms3}\,,
	\end{gather}
	where $\eta_{x,\sigma}(y):=\sigma^{-1} (y-x)$. Indeed, if $L_x=T_x$ is as in (MS'), then \eqref{e:ms3} follows immediately by the definition of approximate tangent, while \eqref{e:ms2} follows from (MS'), the upper semicontinuity of the density and a simple blow-up argument.  
	
	Now, the main content of \cite{Simon} is precisely to show that at $\HH^m$-a.e. $x\in \sing\,M$, for which \eqref{e:ms2} and \eqref{e:ms3} do hold, the blow-up is unique (see the second part of \cite[Proof of Remark 1.14]{Simon}). Indeed, these are the points where no $\delta$-gap nor $\delta$-tilt happens.
	\smallskip

	Finally, we notice that, for the thin obstacle problem and the minimal surfaces, the set of points at which the blow-up limit is unique is characterized differently. In the case if the lower-dimensional (thin) obstacle problem, the blow-up is unique at every point at which the free boundary admits an approximate tangent plane. On the other hand, for minimal surfaces, the blow-up is unique at almost-every point satisfying the conditions \eqref{e:ms2} and \eqref{e:ms3} (this is due to the fact that the uniqueness is achieved by an averaging process), which (as we noticed above) turn out to be fulfilled whenever the singular set admits an approximate tangent plane. In particular, for minimal surfaces we cannot characterize the points with unique blow-up as the ones at which the approximate tangent plane to $\mathcal S_j$ exists. However, this would be the case if we knew a priori that the $(n-m)$-dimensional minimal cones are integrable. Precisely, if the $(n-m)$-dimensional minimal cones were integrable, then the blow-up would be unique at every point satisfying \eqref{e:ms2} and \eqref{e:ms3} (see for instance \cite{Simon2}).  
	
	

\bigskip\bigskip
\noindent {\bf Acknowledgments.} 
The first author acknowledges the support of Dr.\,Max R\"ossler, of the Walter Haefner Foundation, the ETH Z\"urich Foundation and the SNF grant 182565. The second author has been partially supported by the NSF grant DMS 1810645. Most of this paper was written during a meeting in {\it MFO - Oberwolfach Research Institute for Mathematics}; the three authors are grateful to MFO for the kind hospitality. 

Finally, we also wish to acknowledge Xavier Ros-Oton for pointing out a mistake in the first version of the preprint.

\end{document}